\newcounter{Definitioncount}
\newtheorem{theorem}{Theorem}
\newtheorem{lemma}[theorem]{Lemma}
\newtheorem{proposition}[theorem]{Proposition}
\theoremstyle{definition}
\newtheorem{remark}[theorem]{Remark}
\newtheorem{example}{Example}
\newtheoremstyle{fact}{\bigskipamount}{\medskipamount}{\upshape}{}{\itshape}{. }{ }{Fact}
\theoremstyle{fact}
\newtheoremstyle{quest}{\bigskipamount}{\medskipamount}{\upshape}{}{\itshape}{. }{ }{Question}
\theoremstyle{quest}
\newtheoremstyle{step}{2\bigskipamount}{\medskipamount}{\upshape}{}{\itshape}{. }{ }{\underline{Step~\thestep}}
\theoremstyle{step}
\renewcommand{\thestep}{\arabic{step}}
\newcommand{\po}{\ar@{}[dr]|{\text{\pigpenfont R}}}
\newcommand{\pb}{\ar@{}[dr]|{\text{\pigpenfont J}}}
\newcommand{\fpb}{\ar@{}[dr]|{\text{fakepb}}}
\newcommand{\ldual}[1]{\mathord{{\let\nolimits\relax\sideset{^\wedge}{}{#1}}}}
\newcommand{\laction}[2]{\mathord{{\let\nolimits\relax\sideset{^{#1}}{}{#2}}}}
\newcommand{\conj}[2]{\mathord{{\let\nolimits\relax\sideset{^{#1}}{}{#2}}}}
\newcommand{\xra}{\xrightarrow}
\newcommand{\xla}{\xleftarrow}
\newcommand{\era}{\twoheadrightarrow}
\newcommand{\ela}{\twoheadleftarrow}
\newcommand{\mra}{\rightarrowtail}
\def\CC{{\mathscr C}}
\def\CE{{\mathscr E}}
\def\CL{{\mathscr L}}
\def\CM{{\mathscr M}}
\def\CS{{\mathscr S}}
\def\CU{{\mathscr U}}
\newcommand*\bigcdot{\mathpalette\bigcdot@{.5}}
\newcommand*\bigcdot@[2]{\mathbin{\vcenter{\hbox{\scalebox{#2}{$\m@th#1\bullet$}}}}}
\newcommand{\twocong}[2][0.5]{\ar@{}[#2] \save ?(#1)*{\cong}\restore}
\newcommand{\twoeq}[2][0.5]{\ar@{}[#2] \save ?(#1)*{=}\restore}
\newcommand{\ltwocell}[3][0.5]{\ar@{}[#2] \ar@{=>}?(#1)+/r 0.2cm/;?(#1)+/l 0.2cm/^{#3}}
\newcommand{\rtwocell}[3][0.5]{\ar@{}[#2] \ar@{=>}?(#1)+/l 0.2cm/;?(#1)+/r 0.2cm/^{#3}}
\newcommand{\utwocell}[3][0.5]{\ar@{}[#2] \ar@{=>}?(#1)+/d  0.2cm/;?(#1)+/u 0.2cm/_{#3}}
\newcommand{\dtwocell}[3][0.5]{\ar@{}[#2] \ar@{=>}?(#1)+/u  0.2cm/;?(#1)+/d 0.2cm/^{#3}}
\newcommand{\ultwocell}[3][0.5]{\ar@{}[#2] \ar@{=>}?(#1)+/dr  0.2cm/;?(#1)+/ul 0.2cm/^{#3}}
\newcommand{\urtwocell}[3][0.5]{\ar@{}[#2] \ar@{=>}?(#1)+/dl  0.2cm/;?(#1)+/ur 0.2cm/^{#3}}
\newcommand{\dltwocell}[3][0.5]{\ar@{}[#2] \ar@{=>}?(#1)+/ur  0.2cm/;?(#1)+/dl 0.2cm/^{#3}}
\newcommand{\drtwocell}[3][0.5]{\ar@{}[#2] \ar@{=>}?(#1)+/ul  0.2cm/;?(#1)+/dr 0.2cm/^{#3}}
\begin{document}
\author{Ross Street \footnote{The author gratefully acknowledges the support of Australian Research Council Discovery Grant DP160101519.} \\ 
\small{Centre of Australian Category Theory} \\
\small{Macquarie University, NSW 2109 Australia} \\
\small{<ross.street@mq.edu.au>}
}
\title{Span composition using fake pullbacks}
\date{\small{\today}}
\maketitle

\noindent {\small{\emph{2010 Mathematics Subject Classification:} 18B10, 18D05}
\\
{\small{\emph{Key words and phrases:} span; partial map; factorization system.}}

\begin{abstract}
\noindent The construction of a category of spans can be made 
in some categories $\CC$ which do not have pullbacks in the traditional
sense. The PROP for monoids is a good example of such a $\CC$.
The 2012 book concerning homological algebra by Marco Grandis
gives the proof of associativity of relations in a Puppe-exact category
based on a 1967 paper of M.\v{S}. Calenko. 
The proof here is a restructuring of that proof in the spirit of the first
sentence of this Abstract.
We observe that these relations are spans of EM-spans and that 
EM-spans admit fake pullbacks so that spans of EM-spans compose.  
Our setting is more general than Puppe-exact categories. 
\end{abstract}

\tableofcontents

\section*{Introduction}

The construction of a category of spans can be made 
in some categories $\CC$ not having pullbacks in the traditional
sense, only having some form of {\em fake pullback}. 
The PROP for monoids is a good example of such a $\CC$;
it has a forgetful functor to the category of finite sets which
takes fake pullbacks to genuine pullbacks.

As discussed in the book \cite{Grandis} by Marco Grandis,
relations in a Puppe-exact category $\CC$ are zig-zag diagrams of 
monomorphisms
and epimorphisms, not just jointly monomorphic spans as for a regular category
(see \cite{26} for example). Associativity of these zig-zag relations was proved
by M.\v{S}. Calenko \cite{Tsalenko1967} over 50 years ago; also see \cite{BP1969}
Appendix A.5, pages 140--142.   

The present paper is a restructuring of the associativity proof in the spirit of fake pullbacks. 
The original category $\CC$ does not even need to be pointed, 
but it should have a {\em suitable factorization system} $(\CE,\CM)$.
The fake pullbacks are constructed in what we call $\mathrm{Spn}(\CE,\CM)$,
not in $\CC$ itself, and there is no forgetful functor turning them into genuine pullbacks. 
The relations are spans in $\mathrm{Spn}(\CE,\CM)$. 
The main point in proving associativity of the span composition is that fake
pullbacks stack properly.

\section{Suitable factorization systems}

Let $(\CE,\CM)$ be a factorization system in the sense of \cite{FreydKelly} on a category $\CC$. That is, $\CE$ and $\CM$ are sets of morphisms of $\CC$ which contain the isomorphisms, are closed under composition, and satisfy the conditions:
\begin{itemize}
\item[FS1.] if $mu=ve$ with $e\in \CE$ and $m\in \CM$ then there exists a unique $w$
with $we=u$ and $mw=v$; 
\item[FS2.] every morphism $f$ factorizes $f=m\circ e$ with $e\in \CE$ and $m\in \CM$. 
\end{itemize}
If we write $f : A\era B$, we mean $f\in \CE$.
If we write $f : A\mra B$, we mean $f\in \CM$. 
Another way to express FS1 is to ask, for all $X\xra{e}Y\in \CE$ and $A\xra{m}B\in \CM$, that the square \eqref{factsystpb} should be a pullback.
\begin{equation}\label{factsystpb}
\begin{aligned}
\xymatrix{
\CC(Y,A) \ar[rr]^-{\CC(e,A)} \ar[d]_-{\CC(Y,m)} && \CC(X,A) \ar[d]^-{\CC(X,m)} \\
\CC(Y,B) \ar[rr]_-{\CC(e,A)} && \CC(X,B)}
\end{aligned}
\end{equation}
\begin{remark} If we were dealing with a factorization system on a bicategory $\CC$, we would ask \eqref{factsystpb} (with the associativity constraint providing a
natural isomorphism in the square) to be a bipullback. 
Also, in FS2, we would only ask $f\cong m\circ e$.  This is relevant to
Proposition~\ref{2wayFS} and Section~\ref{abstraction} below.  
\end{remark}

The factorization system $(\CE,\CM)$ is {\em suitable} when it satisfies:
 \begin{itemize}
\item[SFS1.] pullbacks of arbitrary morphisms along members of $\CM$ exist; 
\item[SFS2.] pushouts of arbitrary morphisms along members of $\CE$ exist; 
\item[SFS3.] the pullback of an $\CE$ along an $\CM$ is in $\CE$;
\item[SFS4.] the pushout of an $\CM$ along an $\CE$ is in $\CM$;
\item[SFS5.] a commutative square of the form
\begin{equation*}
\begin{aligned}
\xymatrix{
D \ar@{>->}[rr]^-{n} \ar@{->>}[d]_-{e} && B \ar@{->>}[d]^-{d} \\
A \ar@{>->}[rr]_-{m} && C }
\end{aligned}
\end{equation*}
is a pullback if and only if it is a pushout.
\end{itemize}

\begin{proposition}\label{jointly}
\begin{itemize}
\item[(i)] Spans of the form $X\ela S \mra Y$ are jointly monomorphic.
\item[(ii)] Cospans of the form $X\mra C \ela Y$ are jointly epimorphic.
\end{itemize}
\end{proposition}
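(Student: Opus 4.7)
The plan is as follows. For (i), let $f, g : T \to S$ satisfy $ef = eg$ and $mf = mg$; the goal is $f = g$. I would use FS2 to factor $f = m' \circ e'$ with $e' : T \era U$ in $\CE$ and $m' : U \mra S$ in $\CM$. The hypothesis $mf = mg$ then makes the square
\[
\xymatrix@R=2em@C=3em{
T \ar@{->>}[r]^{e'} \ar[d]_{g} & U \ar[d]^{m\circ m'} \\
S \ar@{>->}[r]_{m} & Y
}
\]
commute (since $mg = mf = mm'e'$), and FS1 supplies a unique diagonal $w : U \to S$ with $we' = g$ and $mw = mm'$. Since $m'$ itself trivially satisfies $mm' = mm'$, the identification $w = m'$ would yield $g = we' = m'e' = f$.

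The crucial identification $w = m'$ rests on the uniqueness of the FS1-diagonal combined with the fact that $m \in \CM$ is a monomorphism. This latter fact — members of $\CM$ are monomorphisms (and dually, members of $\CE$ are epimorphisms) — is the classical left-cancellation property for orthogonal factorization systems, obtained from FS1 by applying the diagonal fill-in to the comparison between two $(\CE,\CM)$-factorizations of a single morphism, forcing the mediating map to be an isomorphism.

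Part (ii) is entirely dual: given $f, g : C \to T$ with $fn = gn$ and $fp = gp$ for the cospan $X \stackrel{n}{\mra} C \stackrel{p}{\ela} Y$, one factors the relevant map dually (via FS2) and uses the epi-ness of $p \in \CE$ to conclude $f = g$.

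The main obstacle is thus the unavoidable invocation of $\CM \subseteq \mathrm{Mono}$ and $\CE \subseteq \mathrm{Epi}$; every attempt to prove joint monicity directly from FS1 alone seems to reduce to cancelling an $\CM$-morphism on the left. Fortunately this is a standard lemma in the Freyd-Kelly setting, and its availability makes the substance of the proposition essentially immediate — indeed the equation $ef = eg$ in (i) is, in the end, redundant, with the ``joint'' formulation merely reflecting the natural packaging of a span.
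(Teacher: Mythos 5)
Your argument has a genuine gap: everything rests on the claim that every member of $\CM$ is a monomorphism (dually, every member of $\CE$ an epimorphism), which you describe as a classical consequence of FS1. That is not a theorem about orthogonal factorization systems --- it is the \emph{definition} of the system being \emph{proper}, and it fails in general: $(\CE,\CM)=(\text{isomorphisms},\text{all morphisms})$ satisfies FS1 and FS2, yet $\CM$ need contain no monomorphisms beyond the trivial ones. The uniqueness argument you allude to only shows that the comparison between two $(\CE,\CM)$-factorizations of one and the same morphism is an isomorphism; it does not let you cancel an $\CM$ on the left of two genuinely different morphisms. The paper is careful on exactly this point: properness appears only later, as an \emph{additional} hypothesis, in the Remark on fake monomorphisms. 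If your lemma were available the proposition would indeed be trivial and the condition $ef=eg$ redundant; without it, the ``joint'' in the statement is doing real work, and your reduction of (i) to ``$m$ is monic'' does not go through.

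The intended proof uses the suitability axioms instead. For (ii), given $X\mra C\ela Y$, form the pullback of the $\CE$-leg along the $\CM$-leg, which exists by SFS1; by SFS3 the leg opposite the $\CE$ is again in $\CE$, and by orthogonality $\CM$ is stable under such pullbacks, so the square has exactly the shape required by SFS5 and is therefore also a pushout. Pushout cospans are jointly epimorphic, which gives (ii); part (i) is dual, using SFS2, SFS4 and the other direction of SFS5 together with the fact that pullback spans are jointly monomorphic. Note that SFS5 is indispensable here and is entirely absent from your argument, which never leaves the bare Freyd--Kelly axioms.
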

\begin{proof}
A pullback of $X\mra C \ela Y$ exists by SFS1 and $X\mra C \ela Y$ is the
pushout of the resultant span by SFS5. Pushout cospans are jointly epimorphic.
This proves (ii), and (i) is dual.
\end{proof}
\begin{example}
\begin{itemize}
\item[1.] Take $\CC$ to be the category $\mathrm{Grp}$ of groups, $\CE$ to be the set of surjective morphisms and $\CM$ to be the set of injective morphisms. 
\item[2.] Take $\CC$ to be any Puppe-exact category as studied by Grandis \cite{Grandis},
$\CE$ the epimorphisms and $\CM$ the monomorphisms.
This includes all abelian categories.
\item[3.] Take $\CC$ to be the category $\mathrm{Spn}[\mathrm{Set}_{\mathrm{inj}}]$ of spans in the category of sets and injective functions, $\CE$ the $i^*$ and $\CM$ the $i_*$.
\item[4.] Take $\CC$ to be any groupoid with $\CE = \CM$ containing all morphisms. 
\end{itemize}
\end{example}

Now we remind the reader of Lemma 2.5.9 from \cite{Grandis}.
\begin{lemma}\label{MfactmorphLemma}
In a commutative diagram of the form
\begin{eqnarray}\label{Mfactmorph}
\begin{aligned}
\xymatrix{
A \ar@{->>}[rr]^-{d} \ar@{>->}[d]_-{\ell} && B \ar@{>->}[rr]^-{i} \ar@{>->}[d]^-{m} && C \ar@{>->}[d]^-{n} \\
X \ar@{->>}[rr]_-{e} && Y \ar@{>->}[rr]_-{j}  && Z \ ,
} 
\end{aligned}
\end{eqnarray}
the horizontally pasted square is a pullback if and only if both the component squares are pullbacks.
\end{lemma}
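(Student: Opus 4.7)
The plan is to handle the two directions separately. The implication ($\Leftarrow$) is the standard pasting lemma for pullbacks and needs no input from the factorization system.

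For ($\Rightarrow$), suppose the outer rectangle of \eqref{Mfactmorph} is a pullback. By SFS1, form the genuine pullback $P$ of the cospan $Y \xra{j} Z \xla{n} C$, with projections $p\colon P\to Y$ and $q\colon P\to C$; stability of $\CM$ under pullback (a standard consequence of FS1) places $p,q\in\CM$. The commuting right square of \eqref{Mfactmorph} induces a unique comparison $b\colon B\to P$ with $pb=m$ and $qb=i$. The goal reduces to showing $b$ is an isomorphism, since then the right square is isomorphic to the pullback square (hence itself a pullback), and the left square follows by a second application of the pasting lemma.

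To prove $b$ is invertible, first paste the pullback square just built to the right of the outer rectangle; the outer rectangle of the resulting diagram coincides with the original and is a pullback by hypothesis, so the standard pasting lemma makes
\[
\xymatrix{
A \ar[r]^-{bd} \ar@{>->}[d]_-\ell & P \ar@{>->}[d]^-p \\
X \ar@{->>}[r]_-e & Y
}
\]
a pullback. Since $e\in\CE$ and $p\in\CM$, SFS3 places $bd$ in $\CE$. Now factor $b=m'e'$ via FS2 with $e'\in\CE$ and $m'\in\CM$; then $bd = m'(e'd)$ is an $\CE\CM$-factorization of a morphism that already lies in $\CE$, so uniqueness of $\CE\CM$-factorizations forces $m'$ to be an isomorphism. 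Simultaneously, $pb=m$ rewrites as $m = (pm')\,e'$, an $\CE\CM$-factorization of a morphism that already lies in $\CM$, so uniqueness forces $e'$ to be an isomorphism. Hence $b=m'e'$ is an isomorphism, which finishes the proof.

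The main obstacle is the double appeal to uniqueness of $\CE\CM$-factorizations to trap $b$ between $\CE$ and $\CM$; everything else is bookkeeping via the pasting lemma, SFS3, and the closure of $\CM$ under pullback built into SFS1.
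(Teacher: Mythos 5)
Your proof is correct and follows essentially the same route as the paper's: form the pullback of $j$ and $n$ using SFS1, detect the $\CE$-part via SFS3, and conclude by uniqueness of $(\CE,\CM)$-factorizations that $B$ is the pullback. The only cosmetic difference is that you route the comparison through the induced map $b$ and two separate uniqueness arguments, whereas the paper builds the second pullback square outright and compares the resulting factorization of $i\circ d$ with the given one in a single step.
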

\begin{proof} ``If'' is true without any condition on the morphisms.
For the converse, using SFS1, take the pullback of $j$ and $n$ to obtain another
pastable pair of squares with the same left, right and bottom sides.
The top composites are equal. By factorization system properties and SFS3,
the new top is also a factorization of $i\circ d$ and thus isomorphic to the given factorization. So both of the old squares are also pullbacks.  
\end{proof}

We might call the diagram \eqref{Mfactmorph} an $\CM$-morphism of factorizations.
The dual of the lemma concerns pushouts in $\CE$-morphisms of factorizations;
it also holds since we did not use SFS5 in proving the Lemma. 
However condition SFS5 does tell us that
the left square of \eqref{Mfactmorph} is also a pushout when the pasted 
diagram is a pullback. 

\section{The bicategory of EM-spans}\label{BicatEMSpn}

Some terminology used here, for bicategories, spans and discrete fibrations, is explained in \cite{134}.

Let $(\CE,\CM)$ be a suitable factorization system on the category $\CC$.

We define a bicategory $\mathrm{Spn}(\CE,\CM)$ with the same objects as
$\CC$. The morphisms $(d,R,m) : U \to W$ are spans $U\ela R \mra W$ in $\CC$.
The 2-cells are the usual morphisms of spans. 
Composition is the usual composition of spans; this uses conditions SFS1, SFS3
and closure of $\CE$ under composition. 

Each $(X\xra{m}Y) \in \CM$ gives a morphism $m_* : X\xra{(1_X,X,m)} Y$ in
$\mathrm{Spn}(\CE,\CM)$ and each $(X\xra{e}Y) \in \CE$ gives a morphism 
$e^* : Y\xra{(e,X,1_X)} X$ in $\mathrm{Spn}(\CE,\CM)$.
Write $\CM_*$ for the class of all morphisms isomorphic to $m_*$ for some $m\in \CM$ and write
$\CE^*$ for the class of all morphisms isomorphic to $e^*$ with $e\in \CE$. 

Notice that 2-cells between members of $\CM_*$, 2-cells between members of $\CE^*$, and 2-cells from a member of $\CE^*$ to a member of $\CM_*$, are all invertible.  
 
Proposition~\ref{jointly} tells us that the bicategory $\mathrm{Spn}(\CE,\CM)$ is locally preordered. 

 \begin{proposition}\label{pb2cellinterchangeprop}
 Given $X\xra{m_*}Y\in \CM_*$ and $Z\xra{e^*}Y\in \CE^*$, there exists a diagram
 of the form
 \begin{eqnarray}\label{pb2cellinterchange}
\begin{aligned}
\xymatrix{
 J \ar[r]^-{\bar{e}^*} \ar[d]_-{\bar{m}_*}  & \ltwocell{ld}{ } X \ar[d]^-{m_*} \\
 Z \ar[r]_-{e^*}  & Y 
} 
\end{aligned}
\end{eqnarray}
in $\mathrm{Spn}(\CE,\CM)$, with $\bar{e}\in \CE$ and $\bar{m}\in \CM$, which is unique up to isomorphism.    
  \end{proposition}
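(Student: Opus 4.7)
My plan is to construct $J$ by applying the $(\CE,\CM)$-factorization to $e\circ m$ in $\CC$, then exhibit the 2-cell by mapping into an honest pullback in $\CC$, and finally deduce uniqueness from the essential uniqueness of factorizations. Since $e^*: Z\to Y$ arises from some $e:Y\to Z$ in $\CE$, the composite $e\circ m: X\to Z$ is defined in $\CC$. Applying FS2 factors it as $X \xra{\bar{e}} J \xra{\bar{m}} Z$ with $\bar{e}\in \CE$ and $\bar{m}\in \CM$, and this defines the morphisms $\bar{e}^*: J\to X$ and $\bar{m}_*:J\to Z$ of $\mathrm{Spn}(\CE,\CM)$.

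Next I would unwind the two composites around the square. The upper-right composite $m_* \circ \bar{e}^*$ is directly the span $J \xla{\bar{e}} X \xra{m} Y$, since the inner legs are identities. For $e^* \circ \bar{m}_*$, SFS1 provides a pullback $P$ of $\bar{m}$ along $e$; by SFS3 together with the usual pullback-stability of $\CM$, the projections $p:P\to J$ and $q:P\to Y$ lie in $\CE$ and $\CM$ respectively, so $J \xla{p} P \xra{q} Y$ is a legitimate morphism of $\mathrm{Spn}(\CE,\CM)$. The identity $\bar{m}\bar{e}=em$ makes $(X,\bar{e},m)$ a cone over $\bar{m}$ and $e$, yielding a unique $t:X\to P$ with $pt=\bar{e}$ and $qt=m$, which is precisely a span morphism from the upper-right to the lower-left composite and hence the required 2-cell. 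By Proposition~\ref{jointly} both composites are of the shape $\ela \mra$ and therefore jointly monic, so this 2-cell is forced to be the unique one available.

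For uniqueness of the whole diagram, suppose $(J',\bar{e}',\bar{m}')$ with its own 2-cell is another such datum. The 2-cell is a span morphism $t':X\to P'$ into the pullback $P'$ of $\bar{m}'$ along $e$, satisfying $p't'=\bar{e}'$ and $q't'=m$. Post-composition with $\bar{m}'$ then gives $\bar{m}'\bar{e}' = e\,q' t' = em$, so $X \xra{\bar{e}'} J' \xra{\bar{m}'} Z$ is another $(\CE,\CM)$-factorization of $em$. The uniqueness embedded in FS1 then supplies a unique isomorphism $\phi: J\to J'$ with $\phi\bar{e} = \bar{e}'$ and $\bar{m}'\phi = \bar{m}$, and translating $\phi$ through the embeddings into $\mathrm{Spn}(\CE,\CM)$ gives isomorphisms $\bar{e}^* \cong \bar{e}'^*\circ \phi_*$ and $\bar{m}_* \cong \bar{m}'_*\circ \phi_*$, i.e., the required isomorphism of diagrams.

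The main subtlety I anticipate is just the bookkeeping that verifies both composites genuinely have the shape $\ela \mra$, so that they live in $\mathrm{Spn}(\CE,\CM)$ and local preorder kicks in; once SFS3 and $\CM$-stability under pullback are invoked this is automatic, and the remainder is a direct application of pullback universality and factorization uniqueness.
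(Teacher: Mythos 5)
Your proof is correct and takes essentially the same route as the paper's one-sentence proof: the 2-cell $m_*\circ\bar{e}^*\le e^*\circ\bar{m}_*$ unwinds to precisely the statement that $\bar{m}\circ\bar{e}$ is an $(\CE,\CM)$-factorization of $e\circ m$, so both existence and uniqueness reduce to FS2 and the essential uniqueness of factorizations. You have merely made explicit the span-composition bookkeeping (SFS1, SFS3, pullback-stability of $\CM$, and local preorder via Proposition~\ref{jointly}) that the paper leaves implicit.
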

  \begin{proof}
  Interpreting $m_*\circ \bar{e}^*\le e^*\circ \bar{m}_*$, we see that $\bar{m}\circ \bar{e}$ is forced to be an $(\CE,\CM)$ factorization of $e\circ m$.
  \end{proof}  
  
   \begin{proposition}
If $m\in \CM$ then $m_*$ is a discrete fibration in $\mathrm{Spn}(\CE,\CM)$;
that is, each functor $\mathrm{Spn}(\CE,\CM)(K,m_*)$ is a discrete fibration.  
  \end{proposition}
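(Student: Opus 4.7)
The plan is to unwind what it means for a $1$-cell to be a discrete fibration inside the bicategory $\mathrm{Spn}(\CE,\CM)$: for each object $K$, each span $s : K \to X$, and each $2$-cell $\beta : t \to m_* \circ s$ in $\mathrm{Spn}(\CE,\CM)(K,Y)$, one must produce a unique $2$-cell $\alpha : s' \to s$ with $m_* \circ \alpha = \beta$ (so in particular $m_* \circ s' = t$). Write $s = (K \xla{a} S \xra{b} X)$ and $t = (K \xla{c} T \xra{d} Y)$, with $a, c \in \CE$ and $b, d \in \CM$. By Proposition~\ref{jointly}(i) the $2$-cell $\beta$ is equivalent to a single morphism $h : T \to S$ in $\CC$ satisfying $a \circ h = c$ and $m \circ b \circ h = d$.

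Given such an $h$, my candidate lift is $s' := (K \xla{c} T \xra{b \circ h} X)$, with $\alpha : s' \to s$ the $2$-cell also represented by $h$. The left leg $c$ already belongs to $\CE$, so the one genuine verification, which I expect to be the main obstacle, is $b \circ h \in \CM$. Factor $b \circ h = m' \circ e'$ with $e' \in \CE$ and $m' \in \CM$ by FS2. Then $d = m \circ b \circ h = (m \circ m') \circ e'$ is an $(\CE,\CM)$-factorization of $d$; but $d = d \circ 1_T$ is another, since $d \in \CM$ and $1_T \in \CE$. The essential uniqueness of factorizations furnished by FS1 supplies an isomorphism between the two, forcing $e'$ to be invertible. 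Since isomorphisms lie in $\CM$ and $\CM$ is closed under composition, $b \circ h = m' \circ e' \in \CM$, so $s'$ is a legitimate $1$-cell of $\mathrm{Spn}(\CE,\CM)$. By construction $m_* \circ s' = t$ and $m_* \circ \alpha = \beta$ at the level of the apex morphism $h$.

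Uniqueness of the lift is automatic: the local preordering of $\mathrm{Spn}(\CE,\CM)$ guaranteed by Proposition~\ref{jointly}(i) implies that any two lifts $\alpha_i : s_i \to s$ ($i = 1,2$) of $\beta$ have $s_1$ and $s_2$ uniquely isomorphic as spans in a way compatible with the $\alpha_i$. Thus every functor $\mathrm{Spn}(\CE,\CM)(K, m_*)$ satisfies the lifting condition of a discrete fibration.
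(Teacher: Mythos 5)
The paper states this proposition without proof, so there is no argument of the author's to measure yours against; what you have written is essentially the argument the paper leaves to the reader, and its main content is correct. The heart of the matter is exactly where you locate it: given the apex morphism $h : T \to S$ representing $\beta$, the candidate lift $(c, T, b\circ h)$ is a legitimate $1$-cell of $\mathrm{Spn}(\CE,\CM)$ only if $b \circ h \in \CM$, and your derivation of this from essential uniqueness of $(\CE,\CM)$-factorizations of $d = m\circ b\circ h$ is sound (it is the standard left-cancellation property of the right class: $m\circ (b\circ h)\in\CM$ and $m\in\CM$ force $b\circ h\in\CM$). The identification of $\beta$ with a single morphism $h$ satisfying $a\circ h=c$ and $m\circ b\circ h=d$, and the verification that the lift maps to $\beta$, are also fine.

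One quibble, which is repairable in a line rather than a genuine gap: your uniqueness paragraph does not actually follow from local preordering. Local preordering says there is at most one $2$-cell between two given spans; it does not by itself imply that two lifts $\alpha_i : s_i \to s$ of $\beta$ have isomorphic, let alone equal, domains $s_i$. What really gives uniqueness is that the functor $\mathrm{Spn}(\CE,\CM)(K,m_*)$ acts as the identity on the apex morphisms of $2$-cells (composition with $m_*=(1_X,X,m)$ leaves the apex unchanged), so $m_*\circ\alpha_i=\beta$ forces the apex morphism of each $\alpha_i$ to equal $h$; the compatibility conditions for a span morphism into $s=(a,S,b)$ then determine $s_i=(c,T,b\circ h)$ on the nose. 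I recommend replacing the appeal to Proposition~\ref{jointly}(i) in that paragraph with this direct observation.
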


 \begin{proposition}\label{2wayFS}
 $(\CE^*,\CM_*)$ is a factorization system on the bicategory $\mathrm{Spn}(\CE,\CM)$.  
\end{proposition}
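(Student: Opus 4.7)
The plan is to verify the two defining conditions FS1 (in the bipullback form indicated in the Remark after equation~\eqref{factsystpb}) and FS2 for the pair $(\CE^*, \CM_*)$ on the bicategory $\mathrm{Spn}(\CE,\CM)$.

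Condition FS2 is essentially tautological. A morphism $(d,R,m) : U \to W$ is by definition a span $U \ela R \mra W$ with $d \in \CE$ and $m \in \CM$; the composite $m_* \circ d^*$ is obtained from the trivial pullback of two identity morphisms on $R$, and so it coincides (up to canonical isomorphism) with $(d,R,m)$. Hence every morphism factors as an element of $\CE^*$ followed by an element of $\CM_*$.

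For FS1, I would take $e : A \to B \in \CE$ and $m : C \to D \in \CM$, together with spans $u : B \to C$ and $v : A \to D$ and a 2-cell $m_* u \cong v e^*$ (a span iso, since the hom categories are preordered by Proposition~\ref{jointly}), and construct an essentially unique $w : A \to C$ with $w e^* \cong u$ and $m_* w \cong v$. First apply FS2 to write $u = (m_u)_* \circ d_u^*$ with middle object $U$ and $v = (m_v)_* \circ d_v^*$ with middle object $V$. Direct computation of span composites (pullbacks against the identity legs of $e^*$ and $m_*$ are trivial) shows that the given 2-cell asserts an isomorphism between the spans $(d_u,U,m m_u)$ and $(e d_v, V, m_v)$ from $B$ to $D$; by Proposition~\ref{jointly}(i) this is a uniquely determined iso $\phi : U \to V$ in $\CC$ satisfying $d_u = e d_v \phi$ and $m m_u = m_v \phi$. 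The candidate lift is then the span
\[ w := (m_u \phi^{-1})_* \circ d_v^* : A \to C, \]
that is, $A \overset{d_v}{\ela} V \overset{m_u\phi^{-1}}{\mra} C$. The isomorphisms $w e^* \cong u$ and $m_* w \cong v$ are then read off directly from the defining equations of $\phi$.

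Essential uniqueness of $w$ is by the same bookkeeping: any alternative $w' = (\alpha',W',\beta')$ satisfying the required isos determines compatible identifications of $W'$ with $U$ and with $V$, and the monicity of $\CM$-morphisms (Proposition~\ref{jointly}) forces these identifications to agree with the one coming from $\phi$, so $w' \cong w$. The main conceptual step in the whole argument is the construction of $\phi$ from the hypothesized 2-cell; everything after that is routine span calculus, and I do not anticipate a substantive obstacle beyond phrasing FS1 in the correct bicategorical bipullback form signalled in the Remark.
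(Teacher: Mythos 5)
Your proof follows essentially the same route as the paper: the $\CM_*\CE^*$ decomposition of a span $(d,R,m)$ as $m_*\circ d^*$ is tautological, and the paper merely asserts that the bipullback form of FS1 ``can be readily checked,'' which is precisely the computation you carry out explicitly. One caution on wording: since $(\CE,\CM)$ is not assumed proper, members of $\CM$ need not be monomorphisms, so your uniqueness step should invoke the \emph{joint} monomorphicity of $\CE$--$\CM$ spans from Proposition~\ref{jointly}(i) applied to the composite span $m_*u=(d_u,U,m\circ m_u)$ (rather than monicity of $m_u$ or $m$ alone); with that reading the two identifications of $W'$ with $U$ and $V$ are forced to agree and $w'\cong w$ as you claim.
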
 
\begin{proof}  
Every morphism $(d,R,m) : U \to W$ decomposes as $U\xra{d^*}R \xra{m_*}W$;
this decomposition $\CM_*\CE^*$ is unique up to isomorphism. 
The bipullback form of FS1 can be readily checked for this factorization.
\end{proof}  
  
 \begin{proposition}\label{pbpostar}
 Pullbacks in $\CC$ whose morphisms are all in $\CM$ are taken by $(-)_*$
 to bipullbacks in $\mathrm{Spn}(\CE,\CM)$.
 Also, pushouts in $\CC$ whose morphisms are all in $\CE$ are taken by $(-)^*$
 to bipullbacks in $\mathrm{Spn}(\CE,\CM)$.  
  \end{proposition}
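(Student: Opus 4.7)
The plan is to verify the bipullback condition at each object $K$ by showing the canonical comparison functor from $\mathrm{Spn}(\CE,\CM)(K,P)$ (respectively $\mathrm{Spn}(\CE,\CM)(K,Q)$) into the pseudo-pullback of hom-categories is an equivalence. Since $\mathrm{Spn}(\CE,\CM)$ is locally preordered (Proposition~\ref{jointly}), its hom-categories have at most one morphism between any two objects, so the proof reduces to essential surjectivity on objects together with an existence-of-2-cell check; both will follow from joint monomorphy of spans.

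For the first assertion, let $P$ be the pullback of $A\xrightarrow{\ell}C\xleftarrow{m}B$ with all four legs in $\CM$, and take a cone of spans $(d_1,R,a):K\to A$ and $(d_2,S,b):K\to B$ whose composites with $\ell_*$ and $m_*$ are isomorphic. Joint monomorphy turns this iso into an identification of apexes $R=S$ with $d_1=d_2=d$ and $\ell a=mb$ in $\CC$. The universal property of $P$ then yields a unique $r:R\to P$ with $pr=a$ and $nr=b$, and the standard cancellation in a factorization system (if $gf\in\CM$ and $g\in\CM$ then $f\in\CM$) forces $r\in\CM$. Thus $(d,R,r)$ is a morphism of $\mathrm{Spn}(\CE,\CM)$ whose $p_*$- and $n_*$-composites recover the cone; the full faithfulness check is again routine joint monomorphy.

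The second assertion is more delicate. Let $Q$ be the pushout of $A\xleftarrow{v}D\xrightarrow{u}B$ with all four legs in $\CE$, and consider a cone $(d_1,R,a)$, $(d_2,S,b)$ with a 2-cell iso between their $v^*$- and $u^*$-composites. These composites compute as $(d_1 p_R,R\times_A D,p_D)$ and $(d_2 q_S,S\times_B D,q_D)$ by SFS1 and SFS3, so joint monomorphy gives an iso $\phi:R\times_A D\to S\times_B D$ compatible with all structure. My candidate morphism is $(d,T,h):K\to Q$, where $T=R\sqcup_{R\times_A D}S$ is the pushout in $\CC$ (existing by SFS2 once the pullback $R\times_A D$ is recognized as a pushout via SFS5); the map $d:T\to K$ is induced by $d_1,d_2$ agreeing on $R\times_A D$ (it lies in $\CE$ by the dual cancellation, $d\iota_S=d_2\in\CE$ and $\iota_S\in\CE$ forcing $d\in\CE$); and $h:T\to Q$ is induced by $ga$ and $fb$ agreeing on $R\times_A D$ (using $gv=fu$ together with $\phi$).

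The core technical step is to show $h\in\CM$. I plan to identify $Q$ also as the pushout $T\sqcup_S B$ of $b:S\to B$ along $\iota_S:S\to T$, obtained by pasting $T=R\sqcup_{R\times_A D}S$ with $B=S\sqcup_{S\times_B D} D$ (the SFS5 pushout incarnation of the pullback $S\times_B D$, precomposed with $\phi$) and with the given $Q=A\sqcup_D B$. SFS4 then makes $h\in\CM$ as a pushout of $b\in\CM$ along $\iota_S\in\CE$. A symmetric pasting identifies $Q$ also as $T\sqcup_R A$, and the square with corners $R,T,A,Q$ fits the SFS5 mono-epi pattern (monos $a,h$ parallel; epis $\iota_R,g$ parallel), so it is simultaneously a pullback; hence the $g^*$-composite of $(d,T,h)$ recovers $(d_1,R,a)$, and likewise for $f^*$. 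Full faithfulness once more reduces to joint monomorphy. The main obstacle is orchestrating the pasting: recognizing $Q$ simultaneously as the three pushouts $A\sqcup_D B$, $T\sqcup_S B$, and $T\sqcup_R A$ is what allows SFS4 and SFS5 to fire at the right moments.
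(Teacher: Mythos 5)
The paper states Proposition~\ref{pbpostar} without proof, so there is no official argument to compare against; judged on its own terms, most of your reconstruction is sound. The $\CM$-pullback half is correct: the composites with $\ell_*$, $m_*$ really are $(d_1,R,\ell a)$ and $(d_2,S,mb)$, the induced map $r:R\to P$ lands in $\CM$ by the cancellation property, and the comparison is order-reflecting (though note that step needs slightly more than you say: given fill-ins $f,g$ for the two legs you must first identify them using joint monomorphy of $K\ela R'\mra C$ and the commutativity $\ell p=mn$, then invoke the uniqueness clause of the pullback). The essential-surjectivity argument for the $\CE$-pushout half is the real content and your orchestration is right: exhibiting $Q$ simultaneously as $A\sqcup_D B$, $T\sqcup_S B$ and $T\sqcup_R A$ by pasting the defining pushout of $T$ against the SFS5 pushout incarnations of the pullbacks $R\times_A D$ and $S\times_B D$, so that SFS4 yields $h\in\CM$ and SFS5 (in the other direction) converts the $(R,T,A,Q)$ pushout into the pullback computing $g^*(d,T,h)\cong(d_1,R,a)$. (The parenthetical invoking SFS5 to justify the existence of $T$ is unnecessary --- SFS2 applies directly because $p_R\in\CE$.)

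The genuine gap is the final sentence ``Full faithfulness once more reduces to joint monomorphy.'' For the pushout half it does not. Given $(d,T,h),(d',T',h'):K\to Q$ with 2-cells after applying $g^*$ and $f^*$, you are handed maps $T\times_Q A\to T'\times_Q A$ and $T\times_Q B\to T'\times_Q B$ and must manufacture $\psi:T\to T'$ over $K$ and $Q$. The map $\psi$ itself comes from the FS1 diagonal fill-in against $h'\in\CM$ along the $\CE$-morphism $\pi_A:T\times_Q A\era T$ (and one checks, using joint monomorphy of $(d',h')$ and FS1 uniqueness over $T\times_Q D$, that the $A$-side and $B$-side fill-ins agree); but verifying $d'\psi=d$ only yields $d'\psi\pi_A=d\pi_A$ and $d'\psi\pi_B=d\pi_B$. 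Since members of $\CE$ are not assumed to be epimorphisms, concluding $d'\psi=d$ requires $\pi_A$ and $\pi_B$ to be jointly epimorphic, i.e.\ that the pushout $Q=A\sqcup_D B$ pulls back along $h\in\CM$ to a pushout presentation of $T$ --- a stability statement that is not among SFS1--SFS5 and that Proposition~\ref{jointly} does not supply. You should either prove that stability, restrict to proper $(\CE,\CM)$ where $\CE$-maps are epic (which covers the groups and Puppe-exact examples), or observe explicitly that only the existence half of the bipullback property is used downstream.
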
       

\section{Relations as spans of spans}

By regular categories we mean those in the sense of Barr \cite{Barr1971}
which admit all finite limits.
One characterization of the bicategory of relations in a regular category was
given in \cite{26}.
A relation from $X$ to $Z$ in a regular category is a jointly monomorphic span
from $X$ to $Z$; these are composed using span composition followed by factorization. Equivalently, a relation from $X$ to $Z$ is a subobject of $X\times Y$.

The category $\mathrm{Grp}$ of groups is regular. 
So relations are subgroups of products $X\times Z$. 
The Goursat Lemma \cite{Goursat} is a bijection between subgroups $S\le X\times Z$ of a cartesian product of groups $X$ and $Z$ and end-fixed isomorphism classes of diagrams
 \begin{equation}\label{zig-zag}
 \begin{aligned}
\xymatrix{X  & U \ar@{>->}[l]_-{m} \ar@{->>}[r]^-{d} &  Y & \ar@{->>}[l]_-{e} V \ar@{>->}[r]^-{n} & Z \ . 
}   
  \end{aligned}
\end{equation} 
To obtain $S$ from \eqref{zig-zag}, take the pullback $U\xla{\bar{e}}P\xra{\bar{d}}V$ of $U\xra{d}Y\xla{e}V$ then $S$ is the image of $P\xra{(m\bar{e},e\bar{d})}X\times Y$.
To obtain the zig-zag \eqref{zig-zag} from $S\hookrightarrow X\times Z$, factorize the two
restricted projections to obtain
\begin{equation*}
 \begin{aligned}
\xymatrix{X  & U \ar@{>->}[l]_-{m}  &  S \ar@{->>}[l]_-{e'} \ar@{->>}[r]^-{d'} &  V \ar@{>->}[r]^-{n} & Z \ , 
}   
  \end{aligned}
\end{equation*}   
then pushout $e'$ and $d'$ to obtain $d$ and $e$.

This motivates the definition of {\em relation} from $X$ to $Z$ in a category $\CC$
equipped with a suitable factorization system $(\CE,\CM)$ as an isomorphism
class of diagrams of the form \eqref{zig-zag}.
A good reference is \cite{Grandis} for the case where $\CC$ is Puppe-exact.

The starting point for the present paper was the simple observation that 
a relation diagram \eqref{zig-zag} is a span in $\mathrm{Spn}(\CE,\CM)$:
\begin{eqnarray}
\begin{aligned}
\xymatrix{(d,U,m,Y,e,V,n) \ : \ X & &  Y  \ar[ll]_-{(d,U,m)} \ar[rr]^-{(e,V,n)} & & Z \ .}
\end{aligned}
\end{eqnarray}

Write $\mathrm{Spn}[\CE,\CM]$ for the classifying category of the bicategory $\mathrm{Spn}(\CE,\CM)$; it has the same objects as $\mathrm{Spn}(\CE,\CM)$ and isomorphism classes $[e,S,m]$ of morphisms $(e,S,m)$. 
We would like to define the bicategory $\mathrm{Rel}(\CE,\CM)$ to be $\mathrm{Spn}(\mathrm{Spn}[\CE,\CM])$.
This is satisfactory as a definition of the 2-graph and vertical composition, but for the horizontal composition we need a way to compose spans in $\mathrm{Spn}[\CE,\CM]$.  

\section{A fake pullback construction}

Let $(\CE,\CM)$ be a suitable factorization system on a category $\CC$.
Although $\CC$ may not have all pullbacks, we will now show that 
$\mathrm{Spn}[\CE,\CM]$ does allow some kind of span composition
and this gives a composition of relations.
The construction and proof of associativity restructures that of \cite{Tsalenko1967}. 
We will see in Section~\ref{abstraction} that the properties of
$\mathrm{Spn}(\CE,\CM)$ established in Section~\ref{BicatEMSpn}
allow an abstract proof of associativity of composition of relations.

Take any cospan $U\xra{(d,R,m)}W\xla{(e,S,n)}V$ in $\mathrm{Spn}(\CE,\CM)$. 
Construct the diagram
 \begin{equation}\label{fpbinSpn(E,M)}
 \begin{aligned}
\xymatrix{
Q && Y \ar@{->>}[ll]_-{s} \ar@{>->}[rr]^-{j}  && V \\
X \ar@{->>}[u]^-{r} \ar@{>->}[d]_-{i} && Z \ar@{->>}[u]^-{\bar{e}} \ar@{>->}[d]^-{\bar{n}} \ar@{->>}[ll]^-{\bar{d}} \ar@{>->}[rr]_-{\bar{m}} && S \ar@{->>}[u]_-{e} \ar@{>->}[d]^-{n} \\
U && R   \ar@{->>}[ll]^-{d} \ar@{>->}[rr]_-{m}  && W 
}
 \end{aligned}
\end{equation} 
in which the bottom right square is a pullback of $R\mra W \ela S$, the bottom left
square is an $(\CE,\CM)$-factorization of the composite $Z\mra R \era U$, the top right square is an $(\CE,\CM)$-factorization of the composite $Z\mra S \era V$,
and the top left square is a pushout of the span $X\ela Z\era Y$.

We call the span $U\xla{(r,R,i)}Q\xra{(s,S,j)}V$ the {\em fake pullback} of the given cospan
$U\xra{(d,R,m)}W\xla{(e,S,n)}V$.
We obtain the diagram \eqref{fpbinSpn(E,M)star} in $\mathrm{Spn}(\CE,\CM)$.
The top left square comes from a pushout in $\CC$, the bottom right square
from a pullback in $\CC$, while the 2-cells come from factorizing an $\CM$ followed by an $\CE$ as an $\CE$ followed by an $\CM$.

\begin{equation}\label{fpbinSpn(E,M)star}
 \begin{aligned}
\xymatrix{
Q \ar[d]_-{r^*}  \ar[rr]^-{s^*} \pb &  & Y  \ar[d]|-{\bar{e}^*} \ar[rr]^-{j_*}  &&  \rtwocell{lld}{ } V \ar[d]^-{e^*} \\
X \ar[rr]|-{\bar{d}^*}  \ar[d]_-{i_*} && \ltwocell{lld}{ } Z  \ar[d]|-{\bar{n}_*} \pb  \ar[rr]|-{\bar{m}_*} && S  \ar[d]^-{n_*} \\
U \ar[rr]_-{d^*} && R   \ar[rr]_-{m_*}  && W 
}
 \end{aligned}
\end{equation} 
\begin{remark}\label{fakemonos}
\begin{itemize}
\item[a.] If $d$ is invertible, so is $s$. If $m$ is invertible, so is $j$.
\item[b.] If $(\CE,\CM)$ is proper (that is, every $\CE$ is an epimorphism and every 
$\CM$ is a monomorphism) then every morphism 
$\mathbf{r} : X \to Y$ of $\mathrm{Spn}(\CE,\CM)$ is a ``fake monomorphism''
in the sense that the fake pullback of $X\xra{\mathbf{r}}Y\xla{\mathbf{r}}X$
is the identity span $X\xla{1_X}X\xra{1_X}X$.  
\end{itemize}
\end{remark}
 
\section{An abstraction}\label{abstraction}

A bicategory $\CS$ is defined to be {\em fake pullback ready} when it is locally preordered and is equipped with a factorization system $(\CU,\CL)$ satisfying the following conditions:
\begin{itemize}
\item[V1.] bipullbacks of $\CU$s along $\CU$s exist and are in $\CU$, and bipullbacks of $\CL$s along $\CL$s exist and are in $\CL$;
\item[V2.] given $X\xra{a}Z\xla{x}Y$ with $a\in \CU$ and $x\in \CL$, there exists a
square  
\begin{eqnarray}\label{V2}
\begin{aligned}
\xymatrix{
 U \ar[r]^-{b} \ar[d]_-{y}  & \ltwocell{ld}{ } Y \ar[d]^-{x} \\
 X \ar[r]_-{a}  & Z \ ,
} 
\end{aligned}
\end{eqnarray}
with $b\in \CU$ and $y\in \CL$, which is unique up to equivalence;
\item[V3.] given a diagram
\begin{eqnarray}\label{V3.1}
\begin{aligned}
\xymatrix{
X\ar[r]^-{x} \ar[d]_-{r} \pb & Y \ar[r]^-{a} \ar[d]_-{s}  & \ltwocell{ld}{ } Z \ar[d]^-{t} \\
A \ar[r]_-{y} & B \ar[r]_-{b}  & C \ , 
} 
\end{aligned}
\end{eqnarray}
with the left square a bipullback, $r,s,t,x,y\in \CL$ and $a,b\in \CU$, and factorizations $a\circ x \cong v\circ c$ and
$b\circ y\cong w\circ d$ with $v, w\in \CL$ and $c, d\in \CU$, there exists
a diagram
\begin{eqnarray}\label{V3.2}
\begin{aligned}
\xymatrix{
X\ar[r]^-{c} \ar[d]_-{r} & \ltwocell{ld}{ } I \ar[r]^-{v} \ar[d]^-{q} \pb & Z \ar[d]^-{t} \\
A \ar[r]_-{d} & J \ar[r]_-{w}  & C 
} 
\end{aligned}
\end{eqnarray}
with the right square a bipullback and $q\in \CL$;
\item[V4.] given a diagram
\begin{eqnarray}\label{V4.1}
\begin{aligned}
\xymatrix{
D \ar[r]^-{u} \ar[d]_-{e} & E \rtwocell{ld}{ } \ar[r]^-{h} \ar[d]_-{f} \pb  &  F \ar[d]^-{g} \\
X \ar[r]_-{x} & Y \ar[r]_-{a}  & Z 
} 
\end{aligned}
\end{eqnarray}
with the right square a bipullback, $x,u\in \CL$ and $h, a,e,f,g\in \CU$, and factorizations $h\circ u\cong p\circ k$  and $a\circ x \cong v\circ c$
with $v, p\in \CL$ and $c, k\in \CU$, there exists
a diagram
\begin{eqnarray}\label{V4.2}
\begin{aligned}
\xymatrix{
D \ar[r]^-{k} \ar[d]_-{e} \pb &  K  \ar[r]^-{p} \ar[d]^-{j}  & F \rtwocell{ld}{ } \ar[d]^-{g} \\
X \ar[r]_-{c} & I \ar[r]_-{v}  & Z 
} 
\end{aligned}
\end{eqnarray}
with the left square a bipullback and $j\in \CU$.
\end{itemize}
\begin{proposition}\label{mainexample}
Let $(\CE,\CM)$ be a suitable factorization system on the category $\CC$.
The locally preordered bicategory $\mathrm{Spn}(\CE,\CM)$ is rendered 
fake pullback ready by the 
factorization system $(\CE^*,\CM_*)$ of Proposition~\ref{2wayFS}.
\end{proposition}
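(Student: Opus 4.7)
The strategy is to verify V1--V4 by translating each statement in $\mathrm{Spn}(\CE,\CM)$ back into $\CC$ using the dictionary: a morphism $m_*\in\CM_*$ is a monomorphism $m$ of $\CC$; a morphism $e^*\in\CE^*$ corresponds (with reversed orientation) to an epimorphism $e$ of $\CC$; and, by Proposition~\ref{pbpostar}, bipullbacks of $\CM_*$-morphisms come from pullbacks of $\CM$-morphisms in $\CC$, while bipullbacks of $\CE^*$-morphisms come from pushouts of $\CE$-morphisms in $\CC$. Local preorderedness of $\mathrm{Spn}(\CE,\CM)$ will supply every uniqueness-up-to-equivalence clause.

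For V1, a cospan of $\CE^*$-morphisms $Y_1\to X\leftarrow Y_2$ unpacks to a span of epimorphisms $e_i:X\twoheadrightarrow Y_i$ in $\CC$; the pushout exists by SFS2, its edges lie in $\CE$ by the standard closure of the left class of a factorization system under pushout, and Proposition~\ref{pbpostar} promotes it to a bipullback whose edges are in $\CE^*$. The case of $\CM_*$ along $\CM_*$ is dual, using SFS1 and closure of $\CM$ under pullback. Axiom V2 is a verbatim restatement of Proposition~\ref{pb2cellinterchangeprop}.

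For V3, I would first observe that the left bipullback comes from a pullback square of four monomorphisms in $\CC$, and that the factorization $a\circ x\cong v\circ c$ coincides, by the construction in the proof of Proposition~\ref{2wayFS}, with the $\CC$-pullback of $x$ (a mono) against $a$ (an epi), yielding an object $I$ together with an epi $I\twoheadrightarrow X$ and a mono $v_0:I\rightarrowtail Z$; similarly $J$ is built from $b$ and $y$. The required middle morphism $q\in\CM_*$ is the map $I\to J$ in $\CC$ induced by the universal property of $J$, and it is a mono because $w_0\circ q=t_0\circ v_0$ with $w_0$ a mono and $t_0\circ v_0$ a mono. The new right square of V3.2, read in $\CC$, is a square of four monomorphisms, and associativity of iterated pullbacks identifies $I$ with $Z\times_C J$, so the square is a pullback in $\CC$; a final application of Proposition~\ref{pbpostar} delivers the required bipullback.

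V4 is dual in spirit: the bipullback on the right in V4.1 comes, by Proposition~\ref{pbpostar}, from a pushout of two $\CE$-morphisms in $\CC$; the factorizations $h\circ u\cong p\circ k$ and $a\circ x\cong v\circ c$ again arise from pullbacks of $\CM$-against-$\CE$ pairs, producing $K$ and $I$; and the middle $j\in\CE^*$ is the induced epimorphism $I\twoheadrightarrow K$. The left square of V4.2 is then shown to be a pushout of $\CE$s in $\CC$ by the dual (pushout) associativity argument, after which Proposition~\ref{pbpostar} converts it into the required bipullback; along the way, SFS5 is available to pass between pullback and pushout for the mixed squares that arise. The main obstacle throughout is the bookkeeping demanded by the direction reversal under $(-)^*$ and by the need to match the 2-cells in V3.1 and V4.1 with strictly commuting squares in $\CC$; once the translation is fixed, V3 and V4 each reduce to a single invocation of pullback (respectively pushout) pasting together with Proposition~\ref{pbpostar}.
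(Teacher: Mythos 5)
Your treatment of V1 and V2 coincides with the paper's (they are quoted off Propositions~\ref{pbpostar} and \ref{pb2cellinterchangeprop}), and your V3 is essentially the paper's argument around \eqref{V3diag}: realise the two factorizations as the pullbacks $I$ and $J$ of mono-against-epi cospans, induce $q\colon I\to J$ from the universal property of the pullback $J$, and recognise the new square as a pullback by the cancellation law for pasted pullbacks. Two small remarks there: you should conclude $q\in\CM$ from the cancellation property of the right class of a factorization system ($w\circ q\in\CM$ and $w\in\CM$ imply $q\in\CM$), not merely that $q$ is a monomorphism; and note that your conclusion $q\in\CM_*=\CL$ is indeed what V3 demands, so this part is sound.

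The gap is in V4, where ``dual in spirit'' conceals the one step that does not dualize. After translating to $\CC$ you have the outer rectangle $(I;X,F;E)$ as a pushout (the paste of the two pushouts on the left of \eqref{V4diag}) and the right square $(K;D,F;E)$ as a pushout (a mixed pullback converted by SFS5). What you need is that the left square $(I;X,K;D)$ --- a square all of whose sides must land in $\CE$ if the left square of \eqref{V4.2} is to become a bipullback via Proposition~\ref{pbpostar} --- is a pushout. The ``dual (pushout) associativity argument'' cannot deliver this: cancellation for pasted pushouts says that the outer rectangle together with the \emph{left} square being pushouts forces the right one, whereas here you know the outer rectangle and the \emph{right} square. (This asymmetry is precisely why V3 does go through by pure cancellation: there the unknown square sits on the favourable side.) Relatedly, your $j\colon I\era K$ is only asserted to be an epimorphism; nothing in your construction of $K$ as a pullback puts $j$ in $\CE$. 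The paper closes both gaps at once by a different device: it defines $j$ and $p$ by $(\CE,\CM)$-factorizing $g\circ v$, induces $k\colon K\to D$ by orthogonality (FS1) between the factorizations $p\circ j$ and $u\circ e$ of the top and bottom of the outer pushout, and then invokes the uniqueness of $(\CE,\CM)$-factorizations --- the dual of Lemma~\ref{MfactmorphLemma}, which the paper notes holds without SFS5 --- to conclude that \emph{both} component squares on the right of \eqref{V4diag} are pushouts; only then do $k\in\CE$ and the final application of SFS5 follow. Some form of this factorization-uniqueness argument is indispensable and is missing from your proposal.
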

\begin{proof}
Condition V1 is provided by Proposition~\ref{pbpostar}.
Condition V2 is provided by Proposition~\ref{pb2cellinterchangeprop}.
Consider diagram~\eqref{V3.1} with $x_*,a^*,\dots$ replacing $x,a, \dots$ since $\CL = \CM_*$
and $\CU=\CE^*$ in this case.
The left square amounts to the pullback shown as the right-hand square on the left-hand side of \eqref{V3diag}.
The right-hand square with the 2-cell amounts to the factorization $s\circ a = b\circ t$.
Now form the pullback on the left of the left-hand side of \eqref{V3diag} and the pullback
on the right of the right-hand side of \eqref{V3diag}.
Since $btv=sav=sxc=yrc$, there exists a unique $q$ such that $dq=rc$ and $wq=tv$.
So we have the equal pastings as shown in \eqref{V3diag}.  
\begin{eqnarray}\label{V3diag}
\begin{aligned}
\xymatrix{
I \ar@{>->}[r]^-{c} \ar@{>->}[d]_-{v} \pb & X \ar@{->>}[r]^-{r} \ar@{>->}[d]_-{x} \pb & A \ar@{>->}[d]^-{y} \\
Z \ar@{>->}[r]_-{a} & Y \ar@{->>}[r]_-{s}  & B
} 
\qquad =
\qquad
\xymatrix{
I \ar@{->>}[r]^-{q} \ar@{>->}[d]_-{v} & J \ar@{>->}[r]^-{d} \ar@{>->}[d]^-{w} \pb & A \ar@{>->}[d]^-{y} \\
Z \ar@{->>}[r]_-{t} & C \ar@{>->}[r]_-{b}  & B 
} 
\end{aligned}
\end{eqnarray}
It follows that the left diagram on the right-hand side of \eqref{V3diag} is a pullback and, by SFS3, that $q\in \CE$. Diagram~\eqref{V3.2} results.

It is V4 which requires suitable factorization condition SFS5.
Consider diagram~\eqref{V4.1}.
We have the pushout on the right of the left-hand side of \eqref{V4diag} and
the factorization $fx=ue$. 
Form the pullback of $a$ and $x$ and note, using one direction of SFS5, that it gives the pushout on the left
of the left-hand side of \eqref{V4diag}.
Next, factorize $gv = pj$ through $K$ with $p\in \CM$ and $j\in \CE$.
Using functoriality of factorization FS1, we obtain a unique $k : K\to D$ with $kj=ec$ and $uk=hp$.
\begin{eqnarray}\label{V4diag}
\begin{aligned}
\xymatrix{
I \ar@{>->}[r]^-{v} \ar@{->>}[d]_-{c} \po & Z \ar@{->>}[r]^-{g} \ar@{->>}[d]_-{a} \po & F \ar@{->>}[d]^-{h} \\
X \ar@{>->}[r]_-{x} & Y \ar@{->>}[r]_-{f}  & E 
} 
\qquad =
\qquad
\xymatrix{
I \ar@{->>}[r]^-{j} \ar@{->>}[d]_-{c} & K \ar@{>->}[r]^-{p} \ar@{->>}[d]^-{k} & F \ar@{->>}[d]^-{h} \\
X \ar@{->>}[r]_-{e} & D \ar@{>->}[r]_-{u}  & E 
} 
\end{aligned}
\end{eqnarray}
It follows that both squares on the right-hand side of \eqref{V4diag} are pushouts.
Diagram~\eqref{V4.2} results using the other direction of SFS5 to see that the right
square on the right-hand side of \eqref{V4diag} is a pullback and hence $p_*k^*=h^*u_*$.
\end{proof}

Let $\CS$ be fake pullback ready. The {\em fake pullback} of a cospan
$U\xra{r}W\xla{s}V$ in $\CS$ is constructed as follows.
Factorize $r\cong x\circ a$ and $s\cong y\circ b$ with $a,b\in \CU$ and $x,y\in \CL$.
Using half of V1, take the bipullback of $x$ and $y$ as shown in the bottom right square of \eqref{fpbinabstracto}.
Now construct the bottom left and top right squares of \eqref{fpbinabstracto} using V2.
Using the other half of V1, we obtain the top left bipullback.  
\begin{equation}\label{fpbinabstracto}
 \begin{aligned}
\xymatrix{
Q \ar[d]_-{b'}  \ar[rr]^-{a'} \pb &  & Y  \ar[d]|-{\bar{b}} \ar[rr]^-{\bar{x}}  &&  \rtwocell{lld}{ } V \ar[d]^-{b} \\
X \ar[rr]|-{\bar{a}}  \ar[d]_-{\bar{y}} && \ltwocell{lld}{ } Z  \ar[d]|-{y'} \pb  \ar[rr]|-{x'} && S  \ar[d]^-{y} \\
U \ar[rr]_-{a} && R   \ar[rr]_-{x}  && W 
}
 \end{aligned}
\end{equation} 
The span $U\xla{\bar{y}b'}Q\xra{\bar{x}a'}V$ is our fake pullback of $U\xra{r}W\xla{s}V$.
\begin{proposition}
Fake pullbacks are symmetric. That is, if $U\xla{\bar{s}}Q\xra{\bar{r}}V$ is
a fake pullback of $U\xra{r}W\xla{s}V$ then $V\xla{\bar{r}}Q\xra{\bar{s}}U$ is
a fake pullback of $V\xra{s}W\xla{r}U$.   
\end{proposition}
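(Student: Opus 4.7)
The plan is to trace the construction of \eqref{fpbinabstracto} for the reversed cospan $V\xra{s}W\xla{r}U$ and verify at each of the four stages that the output is equivalent to the original construction with its two legs systematically interchanged. The opening factorizations $r\cong x\circ a$ and $s\cong y\circ b$ are the same unordered pair of factorizations regardless of the orientation of the cospan, so this step contributes nothing.

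Next, the bottom-right bipullback in the reversed construction is the bipullback of $S\xra{y}W\xla{x}R$, which by the symmetry of bipullbacks in their cospan may be taken to be the same object $Z$ with the legs $y'$ and $x'$ interchanged. The V2-square appearing as the new bottom-left then arises from the cospan $V\xra{b}S\xla{x'}Z$, which is exactly the input that produced the original top-right V2-square; by the uniqueness-up-to-equivalence clause in V2, the new bottom-left square is thus identified with the original top-right. Symmetrically, the new top-right V2-square is identified with the original bottom-left, with the obvious renaming of the generated edges. Finally, the top-left bipullback of the reversed construction is of the cospan $Y\xra{\bar{b}}Z\xla{\bar{a}}X$, and symmetry of bipullbacks again identifies it with the original top-left with $a'$ and $b'$ interchanged.

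Assembling these identifications, the fake pullback span of the reversed cospan is $V\xla{\bar{x}\circ a'}Q\xra{\bar{y}\circ b'}U$, which is precisely the original span $U\xla{\bar{y}\circ b'}Q\xra{\bar{x}\circ a'}V$ with its legs transposed. The only real work is the bookkeeping of ensuring that the uniqueness-up-to-equivalence clauses supplied by V1 and V2 at the four successive stages assemble into a coherent equivalence of spans, rather than just pointwise coincidence of intermediate objects; because $\CS$ is locally preordered, all 2-cells are invertible where they exist, so this coherence is automatic and I anticipate no genuine obstacle beyond careful notation.
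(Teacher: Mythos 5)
Your argument is correct and is essentially the paper's own proof written out in full: the paper simply observes that the two bipullbacks in \eqref{fpbinabstracto} are symmetric in their cospans and that both 2-cells point outward to the boundary, so the whole diagram is invariant under transposition about its main diagonal --- which is exactly the stage-by-stage identification you carry out. (One small slip: ``locally preordered'' does not mean all 2-cells are invertible, only that each hom-category is a preorder; but since a preorder admits at most one 2-cell between parallel 1-cells, the coherence you want is still automatic, so nothing breaks.)
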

\begin{proof}
In \eqref{fpbinabstracto}, the bipullbacks are symmetric and both 2-cells point to the boundary of the diagram. 
So the diagram is symmetric about its main diagonal.
\end{proof}
Note that, should a bipullback
\begin{eqnarray*}
\begin{aligned}
\xymatrix{
 U \ar[r]^-{b} \ar[d]_-{y}  & \ltwocell{ld}{\cong} Y \ar[d]^-{x} \\
 X \ar[r]_-{a}  & Z \ ,
} 
\end{aligned}
\end{eqnarray*}
of $X\xra{a}Z\xla{x}Y$ exist with $y\in \CL$ and $b\in \CU$, it would provide
the square for V2. This happens for example when $a$ is an identity, $b$ is
an identity, and $y=x$. Consequently:
\begin{proposition}
An identity morphism provides a fake pullback of an identity morphism along any morphism.    
\end{proposition}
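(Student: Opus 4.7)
The plan is to compute the fake pullback of the given cospan directly via the construction in \eqref{fpbinabstracto}, choosing identity morphisms at every step permitted by the remark immediately above the statement. By the symmetry result just proved, it suffices to treat cospans of the form $U \xra{1_U} U \xla{s} V$. First I would factorize $r = 1_U$ trivially as $1_U \circ 1_U$ (a valid factorization because isomorphisms lie in both $\CU$ and $\CL$), and factorize $s$ in the usual way as $V \xra{b} S \xra{y} U$ with $b \in \CU$ and $y \in \CL$.

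With these initial choices, each of the four squares in \eqref{fpbinabstracto} admits a trivial filler. The bottom right bipullback of $1_U$ along $y$ is $S$ itself, with $x' = 1_S$ and $y' = y$. The bottom left V2 square for $U \xra{1_U} U \xla{y} S$ is the example directly covered by the remark (the $\CU$-leg is an identity), giving $X = S$, $\bar{a} = 1_S$, and $\bar{y} = y$. The top right V2 square for $V \xra{b} S \xla{1_S} S$ is covered by the symmetric case of the same remark, this time with the $\CL$-leg an identity, yielding $Y = V$, $\bar{x} = 1_V$, and $\bar{b} = b$. Finally, the top left bipullback of $1_S$ along $b$ is $V$ itself, with $a' = 1_V$ and $b' = b$.

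Reading off the two legs of the resulting span then gives the fake pullback $U \xla{\bar{y} b' = y b = s} V \xra{\bar{x} a' = 1_V} V$, in which the right leg is the identity on $V$, as required. The only point I anticipate needing care is the symmetric use of the remark in the top right V2 step: but the trivial identity square on a cospan of the form $X \xra{a} Z \xla{1_Z} Z$ is manifestly a bipullback, and its two projections $a$ and $1_X$ respectively lie in $\CU$ and $\CL$, so the remark applies with the roles of $\CU$ and $\CL$ interchanged and furnishes a legitimate V2 square.
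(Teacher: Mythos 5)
Your proposal is correct and follows essentially the same route as the paper: the paper's ``proof'' is just the observation, in the note preceding the statement, that trivial identity squares are bipullbacks and hence serve as the V2 fillers, so that unwinding the construction \eqref{fpbinabstracto} with identity choices at every stage yields a span whose $V$-leg is $1_V$ --- which is exactly the computation you carry out. Your explicit treatment of the top-right square (the symmetric case where the $\CL$-leg rather than the $\CU$-leg is an identity) is a detail the paper leaves implicit, and you justify it correctly from the first sentence of that note.
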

\begin{eqnarray}\label{fakepbstack}
\begin{aligned}
\xymatrix{
P\ar[r]^-{\bar{t}} \ar[d]_-{\bar{\bar{s}}} \fpb & Q \ar[r]^-{\bar{r}} \ar[d]|-{\bar{s}} \fpb &  V \ar[d]^-{s} \\
X \ar[r]_-{t} & U \ar[r]_-{r}  & W  
} 
\qquad
\phantom{a}
\qquad
\xymatrix{
P\ar[r]^-{\bar{r}\bar{t}} \ar[d]_-{\bar{\bar{s}}} \fpb & V \ar[d]^-{s} \\
X \ar[r]_-{rt} & W 
} 
\end{aligned}
\end{eqnarray} 
\begin{proposition}
Fake pullbacks stack. That is, if the two squares on the left of \eqref{fakepbstack}
are fake pullbacks then so is the pasted square on the right of \eqref{fakepbstack}.  
\end{proposition}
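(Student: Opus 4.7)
The plan is to show that when the fake pullback of $r,s$ is expanded into its $3\times 3$ diagram of the form (\ref{fpbinabstracto}) and likewise for the fake pullback of $t,\bar{s}$, the resulting stacked diagram can be rearranged, via V3 and V4, into the $3\times 3$ diagram exhibiting $P$ as the fake pullback of $X\xra{rt}W\xla{s}V$, with the claimed projections $\bar{\bar{s}}$ and $\bar{r}\bar{t}$.

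First, fix $\CL\cdot\CU$ factorizations $r\cong xa$, $s\cong yb$, and $t\cong x_ta_t$. Unfolding the right fake pullback yields an $\CL$-bipullback $Z$ of $x,y$ (V1), two V2-squares producing $X$ and $Y$, a $\CU$-bipullback $Q$ of the two resulting $\CU$-morphisms, and in particular supplies the canonical factorization $\bar{s}\cong\bar{y}b'$. By uniqueness of $\CL\cdot\CU$ factorizations, this is the one used to build the middle fake pullback of $t,\bar{s}$, which supplies an $\CL$-bipullback $Z_1$ of $x_t,\bar{y}$, two further V2-squares, and the $\CU$-bipullback $P$. The two grids share the column $(Q,X,U)$.

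To build the fake pullback of $rt$ and $s$ directly, factor $rt\cong xa\,x_ta_t$. The middle $\CU\cdot\CL$ composite $ax_t$ admits an $\CL\cdot\CU$ factorization $ax_t\cong vu$ (with $v\in\CL$, $u\in\CU$) via the factorization system on $\CS$, yielding $rt\cong(xv)(ua_t)$ through an intermediate object $I$. The bottom $\CL$-bipullback in the $3\times 3$ for $rt$ and $s$ is a bipullback of $xv$ and $y$, which by the pasting lemma for bipullbacks decomposes as the $\CL$-bipullback $Z$ of $x,y$ pasted with an $\CL$-bipullback of $v$ and the projection $y':Z\to R$ (available by V1 since $v,y'\in\CL$). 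Axiom V3 is then exactly what is needed to identify this compound bipullback with $Z_1$ from the middle fake pullback: it transports the factorization $ax_t\cong vu$ across the pasted bipullback, producing the required V2-square on the other side. Dually, V4 identifies the top $\CU$-bipullback of the $3\times 3$ for $rt,s$, obtained by pasting $Q$ with an auxiliary $\CU$-bipullback, with $P$ from the stacked construction. Uniqueness (up to equivalence) of the V2-squares supplied by V2 forces the remaining sides of the two $3\times 3$ grids to agree, and tracing the projections then recovers $\bar{\bar{s}}:P\to X$ and $\bar{r}\bar{t}:P\to V$.

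The main obstacle is coordinating the applications of V3 and V4 so that the single factorization $ax_t\cong vu$ is recognized simultaneously by both: by V3 for the $\CL$-bipullback propagation along the bottom row, and by V4 for the $\CU$-bipullback propagation along the top row. Axioms V3 and V4 were engineered for precisely this sort of interchange-across-a-bipullback manipulation, which is exactly the phenomenon that occurs when the $\CU$-tail of $r$ meets the $\CL$-head of $t$ in the middle of the composite $rt$; their combined use is what allows the two separate $3\times 3$ constructions to assemble coherently into the $3\times 3$ construction for $rt,s$.
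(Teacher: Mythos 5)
Your proposal is correct and follows essentially the same route as the paper: paste the two $3\times 3$ fake-pullback grids along their shared column, refactor the middle $\CU$-after-$\CL$ composite through the factorization system, and use V3 on the bottom pair of middle squares and V4 on the top pair to recognize the result as the fake-pullback grid of the composite cospan. The paper merely performs this transformation in place on the pasted $3\times 5$ diagram rather than comparing it against an independently built grid for $rt$ and $s$, but the ingredients (V3, V4, pasting of bipullbacks, uniqueness of V2-squares and of factorizations) are identical.
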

\begin{proof}
Faced with a diagram like
\begin{equation*}
 \begin{aligned}
\xymatrix{
Q \ar[d]|-{u}  \ar[r]|-{u} \pb & Y  \ar[d]|-{u} \ar[r]|-{\ell}  &  \rtwocell{ld}{ } V \ar[d]|-{u}  \ar[r]|-{u} \pb & A  \ar[d]|-{u} \ar[r]|-{\ell}  &  \rtwocell{ld}{ } D \ar[d]|-{u} \\
X \ar[r]|-{u}  \ar[d]|-{\ell} & \ltwocell{ld}{ } Z  \ar[d]|-{\ell} \pb  \ar[r]|-{\ell} & S   \ar[r]|-{u}  \ar[d]|-{\ell} & \ltwocell{ld}{ } B  \ar[d]|-{\ell} \pb  \ar[r]|-{\ell} & E  \ar[d]|-{\ell} \\
U \ar[r]|-{u} & R   \ar[r]|-{\ell}  & W \ar[r]|-{u} & C   \ar[r]|-{\ell}  & F 
}
 \end{aligned}
\end{equation*} 
in which the arrows marked $u$ are in $\CU$ and those marked $\ell$ are in $\CL$, 
we apply condition V3 to the middle bottom two squares and condition V4 to the
middle top two squares to obtain
\begin{equation*}
 \begin{aligned}
\xymatrix{
Q \ar[d]|-{u}  \ar[r]|-{u} \pb & Y  \ar[d]|-{u} \ar[r]|-{u} \pb  &   I \ar[d]|-{u}  \ar[r]|-{\ell}  & A \rtwocell{ld}{ } \ar[d]|-{u} \ar[r]|-{\ell}  &  \rtwocell{ld}{ } D \ar[d]|-{u} \\
X \ar[r]|-{u}  \ar[d]|-{\ell} & \ltwocell{ld}{ } Z  \ar[d]|-{\ell}   \ar[r]|-{u} & \pb P \ltwocell{ld}{ }  \ar[r]|-{\ell}  \ar[d]|-{\ell} &  B  \ar[d]|-{\ell} \pb  \ar[r]|-{\ell} & E  \ar[d]|-{\ell} \\
U \ar[r]|-{u} & R   \ar[r]|-{u}  & P' \ar[r]|-{\ell} & C   \ar[r]|-{\ell}  & F 
}
 \end{aligned}
\end{equation*}
which is again a fake pullback.
\end{proof}

As a corollary of all this we have:
\begin{theorem}
Let $\CS$ be a fake pullback ready bicategory. 
There is a category $\mathrm{Spn}[\CS]$ whose objects are those of $\CS$,
whose morphisms are isomorphism classes of spans in $\CS$, and
whose composition is defined by fake pullback.
\end{theorem}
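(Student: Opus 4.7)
The plan is to verify the three category axioms---well-definedness on isomorphism classes, unit laws, and associativity---using the material developed in this section. Since $\CS$ is locally preordered, parallel 2-cells between spans are uniquely invertible, so ``isomorphism class'' and ``equivalence class'' coincide, and I will work with representatives up to equivalence of spans.

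For well-definedness, I would observe that every step of the fake pullback construction~\eqref{fpbinabstracto} is only performed up to equivalence: $(\CU,\CL)$-factorizations are unique up to isomorphism, and both V1 (bipullbacks) and V2 (the interchange squares) state their outputs are unique up to equivalence. Thus replacing the input spans by equivalent ones yields an equivalent fake pullback. The unit laws follow at once from the proposition that an identity morphism provides a fake pullback of an identity along any morphism: composing a representative $\alpha = (U \xleftarrow{u_1} A \xrightarrow{u_2} V)$ with the identity span on $V$ requires the fake pullback of $A \xrightarrow{u_2} V \xleftarrow{1_V} V$, which---after reorienting via the symmetry proposition---has apex $A$ with one leg equal to $1_A$, so the composite span is equivalent to $\alpha$. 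The other unit law is dual.

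The main obstacle is associativity. Given a third composable span $\gamma = (W \xleftarrow{w_1} C \xrightarrow{w_2} X)$, let $P_{AB}$ be the fake pullback apex produced in forming $\beta\alpha$ and $P_{BC}$ the one produced in forming $\gamma\beta$; let $R$ be the fake pullback of the cospan $P_{AB} \to B \leftarrow P_{BC}$ built from the two inner legs. I would prove that both $(\gamma\beta)\alpha$ and $\gamma(\beta\alpha)$ are equivalent to the span from $U$ to $X$ with apex $R$. To compute $\gamma(\beta\alpha)$ one takes the fake pullback of $P_{AB} \to W \leftarrow C$, whose left leg factors as $P_{AB} \to B \xrightarrow{v_2} W$; applying the stacking proposition to the pair consisting of the fake pullback of $B \to W \leftarrow C$ followed by that of $P_{AB} \to B \leftarrow P_{BC}$ identifies its apex with $R$. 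For $(\gamma\beta)\alpha$ the cospan is $A \to V \leftarrow P_{BC}$, whose right leg factors as $P_{BC} \to B \xrightarrow{v_1} V$; I would use the symmetry proposition to reorient and invoke stacking again to obtain $R$. The outer legs of both composite spans then agree by construction, being $u_1$ postcomposed with $R \to P_{AB} \to A$ and $w_2$ postcomposed with $R \to P_{BC} \to C$ in each case, and this establishes associativity.
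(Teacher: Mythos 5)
Your proposal is correct and follows exactly the route the paper intends: the theorem is stated there as a corollary of the preceding propositions, with units supplied by the identity-fake-pullback proposition and associativity by symmetry plus the stacking proposition, together with the uniqueness up to equivalence of each stage of the construction \eqref{fpbinabstracto}. You have merely written out the details that the paper leaves implicit.
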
  

\begin{remark}
Given Remark~\ref{fakemonos}, we might call $\CS$ {\em proper} when the identity span provides a fake pullback of each morphism with itself. In this case, each morphism $X\xra{\mathbf{r}}Y$ in 
$\mathrm{Spn}[\CS]$ satisfies $\mathbf{r}\mathbf{r}^{\circ}\mathbf{r}=\mathbf{r}$
where $\mathbf{r}^{\circ} : Y\to X$ is the reverse span of $\mathbf{r}$.   
\end{remark}  

\begin{center}
--------------------------------------------------------
\end{center}

\appendix

\end{document}